\newtheorem{theorem}{Theorem}[section]
\newtheorem{lemma}[theorem]{Lemma}
\newtheorem{corollary}[theorem]{Corollary}
\newtheorem{question}[theorem]{Question}
\theoremstyle{definition}
\newtheorem{definition}[theorem]{Definition}
\newtheorem{remark}[theorem]{Remark}
\newcommand{\conv}[1]{\mathrm{conv}\left\{#1\right\}}
\newcommand{\R}{\mathbb{R}}
\newcommand{\Z}{\mathbb{Z}}
\newcommand{\ba}{\mathbf{a}}
\newcommand{\bp}{\mathbf{p}}
\newcommand{\bq}{\mathbf{q}}
\newcommand{\bv}{\mathbf{v}}
\DeclareMathOperator{\ehr}{Ehr}
\newcommand\commentout[1]{}
\def\@tocline#1#2#3#4#5#6#7{\relax
  \ifnum #1>\c@tocdepth 
  \else
  \par \addpenalty\@secpenalty\addvspace{#2}%
  \begingroup \hyphenpenalty\@M
  \@ifempty{#4}{%
    \@tempdima\csname r@tocindent\number#1\endcsname\relax
  }{%
    \@tempdima#4\relax
  }%
  \parindent\z@ \leftskip#3\relax \advance\leftskip\@tempdima\relax
  \rightskip\@pnumwidth plus4em \parfillskip-\@pnumwidth
  #5\leavevmode\hskip-\@tempdima
  \ifcase #1
  \or\or \hskip 1em \or \hskip 2em \else \hskip 3em \fi%
  #6\nobreak\relax
  \hfill\hbox to\@pnumwidth{\@tocpagenum{#7}}\par
  \nobreak
  \endgroup
  \fi}
\begin{document}



\title{Ehrhart Limits}

\author{Benjamin Braun}
\address{Department of Mathematics\\
  University of Kentucky\\
  Lexington, KY 40506--0027}
\email{benjamin.braun@uky.edu}

\author{McCabe Olsen}
\address{Department of Mathematics\\
  Rose-Hulman Institute of Technology\\
  Terre Haute, IN 47803--3920}
\email{olsen@rose-hulman.edu}

\subjclass[2020]{Primary: 52B20, 05A15, 11A05}


\keywords{Ehrhart theory, lattice simplices, reflexive polytopes}

\date{1 December 2022}

\begin{abstract}
  We introduce the definition of an Ehrhart limit, that is, a formal power series with integer coefficients that is the limit in the ring of formal power series of a sequence of Ehrhart $h^*$-polynomials.
  We identify a variety of examples of sequences of polytopes that yield Ehrhart limits, with a focus on reflexive polytopes and simplices.
\end{abstract}

\thanks{BB was partially supported by National Science Foundation award DMS-1953785.}
\maketitle



\section{Introduction}

Given a lattice polytope $P$, the Ehrhart $h^*$-polynomial of $P$, denoted $h^*(P;z)$, is an important invariant with connections to enumeration, triangulations, commutative algebra, and beyond.
In this work, we consider the following question: which formal power series, i.e., $f(z)\in \Z[[z]]$, are the limit of a sequence of Ehrhart $h^*$-polynomials?
More precisely, for which power series $f$ does there exist a sequence of lattice polytopes $\{P_k\}_{k=1}^\infty$ such that
\[
  \lim_{k\to \infty}h^*(P_k;z)=f(z)\, ?
\]

Note that using the usual topology on $\Z[[z]]$, this limit exists if for each $j\geq 0$, the sequence of coefficients of $z^j$ in $h^*(P_k;z)$ is eventually constant (i.e., it \emph{stabilizes}).
Alternatively stated, viewing the set of all Ehrhart $h^\ast$-polynomials in the subset topology of $\Z[[z]]$, we wish to describe the closure of this set with particular emphasis on elements of the closure that are not polynomials.
Thus, we are most interested in the case where $\dim(P_k)<\dim(P_{k+1})$ for all $k$.
If $f(z)$ is an element of this closure, then we say $f(z)$ is an \emph{Ehrhart limit} and that the sequence of polytopes $P_1,P_2,P_3,\ldots$ \emph{converges to an Ehrhart limit}.

We first remark that every $h^*$-polynomial is itself an Ehrhart limit arising from polytopes of increasing dimension, as the operation of taking lattice pyramids preserves the Ehrhart $h^*$-polynomial.
A second remark is that many sequences of $h^*$-polynomials that are standard examples do not converge.
For example, the polynomial $(1+z)^k$ is the $h^*$-polynomial of the crosspolytope, and the binomial coefficients have wonderful properties; however, this sequence of polynomials has a strictly increasing sequence of linear coefficients.
Similarly, the Eulerian polynomials are $h^*$-polynomials for the unit cubes, and this sequence also does not converge.
Thus, what is required for an Ehrhart limit is that we have a sequence of $h^*$-polynomials where the initial segments of coefficients of the earlier polynomials are identical to the initial segments of later polynomials.
This is not a property that many of the ``usual examples'' in Ehrhart theory have, but there are many examples of sequences of polytopes with this property as we will demonstrate.
One notable condition that an Ehrhart limit must have is that any universal inequality on $h^*$-polynomials, such as those given in~\cite{universalehrhart}, must be satisfied.

Our main goal in this work is to introduce the definition of an Ehrhart limit and identify sequences of lattice polytopes whose $h^*$-polynomials converge.
The sequences of polytopes that we investigate fall into two categories.
First, in Section~\ref{sec:reflexivelimits}, we show that reflexive polytopes are a fruitful source of Ehrhart limits.
Some of these limits arise from the free sum operation applied to a reflexive polytope and reflexive simplices of minimal volume.
Others arise through an operation known as reflexive stabilization~\cite{idpweightedprojectivespace}.
For the Ehrhart limits we consider arising from reflexive polytopes, we are able to describe the actual power series that is the limit.
We also prove that products of Ehrhart limits are Ehrhart limits, using the join operation.

Second, in Section~\ref{sec:limits}, we show that many multi-diagonal simplices in Hermite normal form yield Ehrhart limits.
This section is where most of our work takes place, as the proofs that these limits exist involve careful analysis of the fundamental parallelepiped points for these simplices.
One interesting aspect of these proofs is that while we can prove that many sequences of these $h^*$-polynomials converge, it is challenging to explicitly determine the power series that is the limit.
We are able to find an explicit description for one such sequence.

We end the paper in Section~\ref{sec:conclusion} with some further questions.
By providing evidence that these limits exist and arise naturally, we hope to generate interest in Ehrhart limits and the structure of the resulting power series.


\section{Ehrhart Theory Background}\label{sec:background}

A subset $P\subseteq\R^n$ is a $d$-dimensional (convex) \emph{lattice polytope} if it is the convex hull of finitely many points $\bv^{(1)},\ldots,\bv^{(k)}\in\Z^n$ that span a $d$-dimensional affine subspace of $\R^n$.
We say $P$ is a \emph{$d$-simplex} if $P$ is $d$-dimensional and has exactly $d+1$ vertices.
The \emph{Ehrhart polynomial} of $P$ is the lattice point enumerator $i(P;t):=|tP\cap\Z^n|$, where $tP:=\{t\bp:\bp\in P\}$ denotes the $t^{th}$ dilate of the polytope $P$, while the \emph{Ehrhart series} of $P$ is the rational function
\[
  \ehr_P(z) := \sum_{t\geq0}i(P;t)z^t = \frac{h_0^*+h_1^*z+\cdots+h_d^*z^d}{(1-z)^{\dim(P)+1}} \, .
\]
That this is a rational function is a consequence of a theorem due to Ehrhart~\cite{ehrhart}.
It is also known that the coefficients $h^\ast_0,h^\ast_1,\ldots,h^\ast_d$ are all nonnegative integers \cite{StanleyDecompositions}, and the polynomial $h^\ast(P;z):=h_0^*+h_1^*z+\cdots+h_d^*z^d$ is called the \emph{(Ehrhart) $h^\ast$-polynomial} of $P$.

When $\Delta$ is a lattice simplex, there is a wonderful arithmetic interpretation of the $h^*$-polynomial.
If $\Delta=\conv{\bv^{(1)},\ldots,\bv^{(d+1)}}$, the \emph{fundamental parallelepiped} for $\Delta$ is
\[
  \Pi_\Delta := \left\{\sum_{i=1}^{d+1}\lambda_i(1,\bv^{(i)}):0\leq \lambda_i<1 \text{ for all }i \right\}\, .
\]
In this case, it straightforward to show via a tiling argument that
\begin{equation}\label{eq:fppsum}
h^*(\Delta;z)=\sum_{(m_0,m_1,\ldots,m_{d})\in \Z^{d+1}\cap \Pi_\Delta}z^{m_0} \, ,
\end{equation}
i.e., the $h^*$-polynomial is the generating function for the heights of the lattice points in $\Pi_\Delta$.


\section{Ehrhart Limits from Reflexive Polytopes}\label{sec:reflexivelimits}

We begin with two examples of Ehrhart limits arising from reflexive polytopes.
Recall that a lattice polytope $P$ is \emph{reflexive} if $P$ contains the origin in its interior and both $P$ and the dual of $P$ are lattice polytopes.
Reflexive polytopes have the following useful property with regard to the free sum operation.

\begin{theorem}[Braun, \cite{BraunEhrhartFormulaReflexivePolytopes}]\label{thm:freesum}
  Given a reflexive polytope $P$ and a lattice polytope $Q$ whose interior contains the origin, the $h^*$-polynomial of the free sum $P\oplus Q$ is given by $h^*(P;z)h^*(Q;z)$.
\end{theorem}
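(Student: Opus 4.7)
The plan is to translate the statement into a claim about Ehrhart series using the gauge function. For any bounded convex set $K$ containing the origin in its interior, let $\rho_K(x) := \min\{\lambda \geq 0 : x \in \lambda K\}$. Writing an arbitrary element of $P \oplus Q$ as a convex combination $\lambda(p,0) + (1-\lambda)(0,q)$ with $p \in P$, $q \in Q$, $\lambda \in [0,1]$, one checks directly that $(x,y) \in t(P \oplus Q)$ if and only if $\rho_P(x) + \rho_Q(y) \leq t$. Summing $z^t$ over $t \geq 0$ and exchanging the order of summation then yields
\[
\ehr_{P \oplus Q}(z) \;=\; \frac{1}{1-z} \sum_{(p,q) \in \Z^{m+n}} z^{\lceil \rho_P(p) + \rho_Q(q) \rceil},
\]
and the same identity applied to $P$ alone (respectively $Q$ alone) gives $(1-z)\ehr_P(z) = \sum_{p} z^{\lceil \rho_P(p) \rceil}$ and the analogous formula for $Q$.

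The crucial step is to exploit the reflexivity of $P$: writing $P = \{x : \langle a_i, x \rangle \leq 1, \ i = 1, \ldots, k\}$ where the $a_i \in \Z^m$ are the (lattice) vertices of the dual polytope, one sees that $\rho_P(p) = \max(0, \max_i \langle a_i, p \rangle)$ is an integer for every $p \in \Z^m$. Consequently $\lceil \rho_P(p) + \rho_Q(q) \rceil = \rho_P(p) + \lceil \rho_Q(q) \rceil$, and the double sum above factors as
\[
\sum_{(p,q)} z^{\rho_P(p)} \cdot z^{\lceil \rho_Q(q) \rceil} \;=\; (1-z)\ehr_P(z) \cdot (1-z)\ehr_Q(z).
\]
Dividing through by $1-z$ gives $\ehr_{P \oplus Q}(z) = (1-z)\, \ehr_P(z)\, \ehr_Q(z)$. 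Since $\dim(P \oplus Q) = \dim(P) + \dim(Q) = m + n$, substituting $\ehr_K(z) = h^*(K;z) / (1-z)^{\dim(K)+1}$ for $K \in \{P, Q, P \oplus Q\}$ and cancelling the common powers of $(1-z)$ on both sides yields the desired identity $h^*(P \oplus Q; z) = h^*(P;z)\, h^*(Q;z)$.

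The main obstacle is the single arithmetic step showing that $\rho_P$ is integer-valued on $\Z^m$; this is the one place reflexivity enters, and it is the reason no analogous hypothesis on $Q$ is required. Without the integrality of $\rho_P$ on lattice points, the ceiling in the double sum fails to distribute and the factorization collapses. Everything else is bookkeeping with geometric series.
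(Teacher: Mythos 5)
Your argument is correct. Note that the paper does not prove this statement at all; it imports it as a known theorem from the cited reference, so there is no internal proof to compare against. Your gauge-function proof is a valid self-contained reconstruction, and the place where you use reflexivity is exactly the right one: the integrality of $\rho_P$ on $\Z^m$ is equivalent to the standard characterization that every nonzero lattice point lies on the boundary of a unique positive integer dilate of $P$, which is the combinatorial form in which the original proof of this theorem runs (there one writes $i(P\oplus Q;t)=\sum_{j=0}^{t}\bigl|\partial(jP)\cap\Z^m\bigr|\cdot i(Q;t-j)$ and converts the resulting Cauchy product into the factorization of Ehrhart series). Two small points worth making explicit if you write this up: first, the identity $\rho_{P\oplus Q}(x,y)=\rho_P(x)+\rho_Q(y)$ deserves the one-line verification in both directions (for the converse, choose any $\lambda$ with $\rho_P(x)\leq\lambda\leq 1-\rho_Q(y)$, handling the degenerate endpoints $x=\mathbf{0}$ or $y=\mathbf{0}$ separately); second, the interchange of the sums over $t$ and over lattice points is legitimate as an identity of formal power series because boundedness of $P$ and $Q$ guarantees only finitely many lattice points contribute to each power of $z$. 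With those details filled in, everything checks out, including the final dimension count $\dim(P\oplus Q)=\dim(P)+\dim(Q)$ used to cancel the powers of $1-z$.
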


Recall that the simplex $S_d:=\conv{e_1,e_2,\ldots,e_d,-\sum_{i=1}^de_i}$ is reflexive and $h^*(S_d;z)=\sum_{i=0}^dz^i$.
Using $S_d$, we can prove the following theorem, establishing our first collection of Ehrhart limits.

\begin{theorem}\label{thm:freesumlimit}
  If $Q$ is a lattice polytope with the origin in its interior, and if $k\in \Z_{\geq 1}$, then
  \[
    \frac{h^*(Q;z)}{(1-z)^k}
  \]
  is an Ehrhart limit.
\end{theorem}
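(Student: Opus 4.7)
The plan is to exhibit an explicit sequence whose $h^*$-polynomials stabilize to $\frac{h^*(Q;z)}{(1-z)^k}$ coefficient by coefficient. Since the reflexive simplex $S_d$ satisfies $h^*(S_d;z)=1+z+\cdots+z^d=\frac{1-z^{d+1}}{1-z}$, and the free sum of reflexive polytopes is reflexive, I would define
\[
  P_n \;:=\; \underbrace{S_n \oplus S_n \oplus \cdots \oplus S_n}_{k \text{ copies}} \oplus\, Q.
\]
Each iterated free sum is well-defined because every $S_n$ contains the origin in its interior, and $\dim(P_n) = kn + \dim(Q)$, which is strictly increasing in $n$, so the sequence is of the kind emphasized in the paper.

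Next I would iterate Theorem \ref{thm:freesum}. Applying it among the reflexive summands, the iterated free sum of $k$ copies of $S_n$ is reflexive with $h^*$-polynomial $(1+z+\cdots+z^n)^k$. Applying the theorem once more with this reflexive polytope and $Q$ (whose interior contains the origin by hypothesis) yields
\[
  h^*(P_n;z) \;=\; (1+z+\cdots+z^n)^k \cdot h^*(Q;z) \;=\; \left(\frac{1-z^{n+1}}{1-z}\right)^{\!k} h^*(Q;z).
\]
In the $z$-adic topology on $\Z[[z]]$, the factor $(1-z^{n+1})^k$ converges to $1$, since every coefficient of $z^j$ in $(1-z^{n+1})^k$ equals its eventual value as soon as $n \geq j$. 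Multiplication by the fixed polynomial $h^*(Q;z)$ is continuous, so
\[
  \lim_{n \to \infty} h^*(P_n;z) \;=\; \frac{h^*(Q;z)}{(1-z)^k},
\]
which is exactly the claim.

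The only subtlety is justifying the iterated application of Theorem \ref{thm:freesum}, which requires knowing that free sums of reflexive polytopes are again reflexive; this is a well-known property of the free sum, so the proof is essentially verification rather than requiring a substantial new idea. No obstacle of genuine mathematical depth is expected here.
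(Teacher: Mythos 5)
Your proposal is correct and is essentially the paper's own argument: the paper likewise takes the sequence $Q\oplus\left(\oplus_1^k S_d\right)$, applies Theorem~\ref{thm:freesum} to get $h^*(Q;z)\left(\sum_{i=0}^d z^i\right)^k$, and passes to the limit in $\Z[[z]]$. Your extra care about iterating the free sum is fine (and one can even avoid invoking reflexivity of the free sum by always peeling off a single reflexive $S_d$ against a remaining factor that contains the origin in its interior), but it does not change the substance.
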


\begin{proof}
  Consider the sequence of polytopes $Q\oplus \left(\oplus_1^k S_d\right)$ for $d\geq 1$.
  By Theorem~\ref{thm:freesum}, their $h^*$-polynomials are
  \[
    h^*\left(Q\oplus \left(\oplus_1^k S_d\right);z\right)=h^*(Q;z)\left(\sum_{i=0}^dz^i\right)^k \, .
  \]
  As $d\to \infty$, the limit of these $h^*$-polynomials is the series $\frac{h^*(Q;z)}{(1-z)^k}$, as desired.  
\end{proof}

Theorem~\ref{thm:freesumlimit} can be extended using the join operation, where the \emph{join} of two lattice polytopes $P\subset \R^{d_P}$ and $Q\subset \R^{d_Q}$ is
\[
  P\star Q = \conv{P\times\{\mathbf{0}_{d_Q}\}\times \{0\} \cup \{\mathbf{0}_{d_P}\}\times Q \times \{1\}} \subset \R^{d_P+d_Q+1}\, .
\]
It is a standard exercise to prove that $h^*(P\star Q;z)=h^*(P;z)h^*(Q;z)$ for any lattice polytopes $P$ and $Q$.
The following result is an immediate consequence.

\begin{theorem}\label{thm:multiplication}
  If $\{P_i:i\in \Z_{\geq 0}\}$ converges to an Ehrhart limit $f(z)$ and $\{Q_i:i\in \Z_{\geq 0}\}$ converges to an Ehrhart limit $g(z)$, then $\{P_i\star Q_i:i\in \Z_{\geq 0}\}$ converges to the Ehrhart limit $f(z)g(z)$.
Thus, the set of Ehrhart limits is closed under multiplication.
  \end{theorem}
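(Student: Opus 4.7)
The plan is to combine the stated join identity $h^\ast(P\star Q;z)=h^\ast(P;z)h^\ast(Q;z)$ with the continuity of multiplication in $\Z[[z]]$ in the topology described in the introduction. First I would observe that for each $i$,
\[
h^\ast(P_i\star Q_i;z)=h^\ast(P_i;z)\cdot h^\ast(Q_i;z),
\]
so it suffices to show that the product of two coefficientwise-stabilizing sequences in $\Z[[z]]$ stabilizes coefficientwise to the product of their limits.

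To verify this, I would fix an arbitrary degree $j\geq 0$ and write the coefficient of $z^j$ in the product as the finite convolution
\[
[z^j]\bigl(h^\ast(P_i;z)h^\ast(Q_i;z)\bigr)=\sum_{k=0}^{j}[z^k]h^\ast(P_i;z)\cdot[z^{j-k}]h^\ast(Q_i;z).
\]
By hypothesis, for each of the finitely many indices $k\in\{0,1,\ldots,j\}$, the integer sequences $\{[z^k]h^\ast(P_i;z)\}_i$ and $\{[z^k]h^\ast(Q_i;z)\}_i$ are eventually constant, equal respectively to $[z^k]f(z)$ and $[z^k]g(z)$. Choosing an index $N=N(j)$ beyond which all of these finitely many stabilizations have simultaneously occurred, we obtain $[z^j]\bigl(h^\ast(P_i;z)h^\ast(Q_i;z)\bigr)=[z^j]f(z)g(z)$ for every $i\geq N$. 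Since $j$ was arbitrary, the sequence $\{h^\ast(P_i\star Q_i;z)\}_i$ converges to $f(z)g(z)$ in $\Z[[z]]$, which is exactly the assertion that $f(z)g(z)$ is an Ehrhart limit.

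There is essentially no substantive obstacle: the argument is a one-line application of the join formula combined with the standard fact that multiplication in the $z$-adic topology on $\Z[[z]]$ is continuous. The only incidental point worth noting is that $\dim(P_i\star Q_i)=\dim(P_i)+\dim(Q_i)+1$, so if the two input sequences have strictly increasing dimension then so does the sequence of joins, and $f(z)g(z)$ genuinely arises as a limit of $h^\ast$-polynomials of polytopes of unbounded dimension in the sense emphasized in the introduction.
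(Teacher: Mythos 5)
Your proof is correct and follows exactly the route the paper intends: the paper states the join identity $h^\ast(P\star Q;z)=h^\ast(P;z)h^\ast(Q;z)$ and declares the theorem an ``immediate consequence,'' and your finite-convolution argument is precisely the routine continuity-of-multiplication detail being left implicit. No issues.
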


Our second example does not arise from free sums, but it is the Ehrhart limit of a sequence of reflexive simplices.
Given a vector $\bq\in \Z_{\geq 1}^d$, define the $d$-dimensional simplex
\[
  \Delta_{(1,\bq)}:=\conv{e_1,e_2,\ldots,e_d,-\bq} \, .
\]
\begin{theorem}[Braun, Davis, Hanely, Lane, Solus, \cite{idpweightedprojectivespace}]\label{thm:almostidpunimodal}
  For $n\geq 1$, define
  \[
    \bq(n):=(\underbrace{1,1,\ldots,1}_{2n-1 \text{ times}},3n,10n,15n) \, .
  \]
  For $n\geq 2$, $\Delta_{(1,\bq(n))}$ is reflexive and
  \[
    h^*(\Delta_{(1,\bq(n))};z)=(1+z^2+z^4+z^6+\cdots+z^{2n-2})\cdot (1+7z+14z^2+7z^3+z^4) \, ,
  \]
  which has coefficient vector $(1, 7, 15, 14, 16, 14,16,14,\ldots, 16, 14, 16, 14, 15, 7, 1)$.
\end{theorem}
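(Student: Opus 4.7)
The plan is to first verify reflexivity of $\Delta_{(1,\bq(n))}$, then compute the $h^\ast$-polynomial explicitly by parametrizing the lattice points in its fundamental parallelepiped via equation~\eqref{eq:fppsum}.

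For reflexivity, a standard check on the facet-defining hyperplane opposite each standard basis vector shows that $\Delta_{(1,\bq)}$ with $\bq \in \Z_{\geq 1}^d$ is reflexive if and only if $q_i \mid N$ for every $i$, where $N := 1 + \sum_j q_j$. For $\bq(n)$ we have $N = (2n-1) + 3n + 10n + 15n + 1 = 30n$, and each of $1,\,3n,\,10n,\,15n$ divides $30n$, giving reflexivity.

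For the $h^\ast$-polynomial, lattice points in $\Pi_{\Delta_{(1,\bq(n))}}$ are parameterized by $\lambda_{d+1} = k/N$ with $k \in \{0,1,\ldots,N-1\}$, with the remaining coordinates forced by $\lambda_i = \{q_i \lambda_{d+1}\}$ for $1 \leq i \leq d$: the coordinate-wise integrality of points in $\Pi_\Delta\cap \Z^{d+1}$ gives $\lambda_i = \{q_i \lambda_{d+1}\}$, and integrality of the height $\sum_i \lambda_i$ reduces to $N\lambda_{d+1} \in \Z$. With $N = 30n$, the height associated to index $k$ is
\[
  m_0(k) = (2n-1)\cdot\frac{k}{30n} + \left\{\frac{k}{10}\right\} + \left\{\frac{k}{3}\right\} + \left\{\frac{k}{2}\right\} + \frac{k}{30n} = \frac{k}{15} + \left\{\frac{k}{10}\right\} + \left\{\frac{k}{3}\right\} + \left\{\frac{k}{2}\right\}.
\]
Writing $k = 30m + r$ with $0 \leq m < n$ and $0 \leq r < 30$, the three fractional-part terms depend only on $r$, and $k/15 = 2m + r/15$, so $m_0(k) = 2m + m_0(r)$. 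This yields the factorization
\[
  h^\ast(\Delta_{(1,\bq(n))};z) = \left(\sum_{m=0}^{n-1} z^{2m}\right)\left(\sum_{r=0}^{29} z^{m_0(r)}\right).
\]

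The remaining step is to evaluate the second factor by a finite tabulation of $m_0(r)$ for $r \in \{0,1,\ldots,29\}$. This is the main obstacle — not conceptually difficult, but genuinely requiring thirty case evaluations since there is no obvious closed form for $r/15 + \{r/10\} + \{r/3\} + \{r/2\}$. (One could attempt a mild simplification via the Chinese Remainder decomposition $\Z/30\Z \cong \Z/2 \times \Z/3 \times \Z/5$ to understand how each fractional term depends on a single residue, but this still amounts to a small case analysis.) The tabulation produces one occurrence of $m_0(r) = 0$ (at $r=0$), seven of $m_0(r) = 1$, fourteen of $m_0(r) = 2$, seven of $m_0(r) = 3$, and one of $m_0(r) = 4$ (at $r=29$), yielding the second factor $1 + 7z + 14z^2 + 7z^3 + z^4$ and completing the proof.
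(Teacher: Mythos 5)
The paper does not prove this statement itself; it is quoted verbatim from the cited work of Braun--Davis--Hanely--Lane--Solus, so there is no in-paper proof to compare against. Your argument is correct and is essentially the standard computation for simplices of the form $\Delta_{(1,\bq)}$ used in that reference: the divisibility criterion $q_i \mid 1+\sum_j q_j$ for reflexivity, the parametrization of $\Pi_{\Delta}\cap\Z^{d+1}$ by $\lambda_{d+1}=k/N$ with heights $\frac{k}{N}+\sum_i\{q_ik/N\}$, and the reduction modulo $30$ giving the factorization $\bigl(\sum_{m=0}^{n-1}z^{2m}\bigr)\bigl(\sum_{r=0}^{29}z^{m_0(r)}\bigr)$. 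I verified the thirty-case tabulation of $m_0(r)=r/15+\{r/10\}+\{r/3\}+\{r/2\}$ and it does yield the multiset $\{0^1,1^7,2^{14},3^7,4^1\}$, so the second factor is $1+7z+14z^2+7z^3+z^4$ as claimed.
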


Letting $n\to \infty$ in the preceding theorem, we obtain the following corollary.

\begin{corollary}\label{cor:delta1qlimit}
  The series $1+7z+15z^2+14z^3+16z^4+14z^5+16z^6+14z^7+16z^8+\cdots$ is an Ehrhart limit.
\end{corollary}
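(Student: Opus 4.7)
The plan is to apply Theorem~\ref{thm:almostidpunimodal} directly and pass to the limit in the $z$-adic topology on $\Z[[z]]$. First I would verify that the sequence $\{\Delta_{(1,\bq(n))}\}_{n\geq 2}$ consists of polytopes of strictly increasing dimension: since $\bq(n)$ has $2n+2$ entries, the simplex $\Delta_{(1,\bq(n))}$ is $(2n+2)$-dimensional, so the sequence qualifies as a candidate whose limit, if it exists in $\Z[[z]]$, is an Ehrhart limit in the sense defined in the introduction.

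Next, using the factorization
\[
h^*(\Delta_{(1,\bq(n))};z) = (1+z^2+z^4+\cdots+z^{2n-2}) \cdot (1+7z+14z^2+7z^3+z^4)
\]
supplied by Theorem~\ref{thm:almostidpunimodal}, I would observe that the first factor converges in $\Z[[z]]$ to $\tfrac{1}{1-z^2}$: for each fixed $j$, the coefficient of $z^j$ in $1+z^2+\cdots+z^{2n-2}$ stabilizes as soon as $2n-2\geq j$. Since multiplication by the fixed polynomial $1+7z+14z^2+7z^3+z^4$ is continuous in this topology, the sequence of $h^*$-polynomials converges to
\[
\frac{1+7z+14z^2+7z^3+z^4}{1-z^2} \, .
\]

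Finally, I would expand this rational function as a formal power series to confirm it matches the claimed series. Writing $\tfrac{1}{1-z^2} = \sum_{k\geq 0} z^{2k}$ and distributing, the coefficient of $z^j$ for even $j\geq 4$ collects $1+14+1 = 16$ from the $z^4$, $z^2$, and constant terms, while the coefficient of $z^j$ for odd $j\geq 3$ collects $7+7 = 14$; the low-order coefficients $1, 7, 15, 14$ follow by direct computation. This agrees exactly with the stated series. There is no genuine obstacle here, since Theorem~\ref{thm:almostidpunimodal} already provides the essential content; the corollary is a short formal consequence of passing to the limit in $\Z[[z]]$.
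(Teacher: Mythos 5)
Your proposal is correct and takes essentially the same route as the paper, which simply lets $n\to\infty$ in Theorem~\ref{thm:almostidpunimodal}; your added details (the dimension count, the identification of the limit as $(1+7z+14z^2+7z^3+z^4)/(1-z^2)$, and the coefficient check) are all accurate elaborations of that one-line step.
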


\section{Ehrhart Limits From Simplices}\label{sec:limits}

In this section, we focus our attention on sequences of lattice simplices with specific Hermite normal forms.
Recall that a simplex is in Hermite normal form if it has the origin as a vertex and the other vertices are the columns of a matrix that is upper triangular, where the entries in the $j$-th column are non-negative integers strictly less than the $j$-th diagonal element.
Every lattice simplex is unimodularly equivalent to a unique lattice simplex in Hermite normal form~\cite{schrijverbook}.
One of the interesting aspects of the sequences we study in this section is that while we are able to prove that their $h^*$-polynomials converge, it seems difficult to determine an explicit form for their limit.

In the first subsection, we will prove convergence for special sequences of Hermite normal form simplices, for one of which we can explicitly describe the limit.
In the second subsection, we consider a more general collection of sequences of Hermite normal form simplices and prove that those sequences yield Ehrhart limits.
The results in this section were motivated in part by experiments conducted using SageMath~\cite{sage}.

\subsection{Bidiagonal Matrices}\label{sec:mcase}

In this subsection, we focus on the following simplices.

\begin{definition}\label{def:Pmd}
For $m\in \Z$ with $m\geq 2$, we denote by $P_{m,d}$ the $(d-1)$-dimensional simplex with vertices given by the columns of the $(d-1)\times d$ matrix
\[
\left[
    \begin{array}{cccccccccc}
      0 & 1 & 1 & 0 & 0 & 0 & 0 & \cdots & 0 \\
      0 & 0 & m & 1 & 0 & 0 & 0 & \cdots & 0 \\
      0 & 0 & 0 & m & 1 & 0 & 0 & \cdots & 0 \\
      0 & 0 & 0 & 0 & m & 1 & 0 & \cdots & 0 \\
      0 & 0 & 0 & 0 & 0 & m & 1 & \cdots & 0 \\
      \vdots & \vdots & \vdots &  \vdots & \vdots & \vdots & \ddots & \ddots &\vdots  \\
      0 & 0 & 0 & 0 & 0 & 0 & 0 & \ddots & 1 \\
      0 & 0 & 0 & 0 & 0 & 0 & 0 & \cdots & m \\
    \end{array}
  \right] \, .
\]
\end{definition}

Our main results in this subsection are the following.

\begin{theorem}\label{thm:mehrhartlimit}
  For fixed $m\geq 2$, the polytopes $P_{m,d}$ converge to an Ehrhart limit as $d\to \infty$.
\end{theorem}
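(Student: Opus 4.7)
The plan is to use the fundamental parallelepiped formula \eqref{eq:fppsum} and show that for each fixed height $j$, the number of lattice points in $\Pi_{P_{m,d}}$ at height $j$ is eventually constant in $d$. Parameterize a lattice point in $\Pi_{P_{m,d}}$ by $(\lambda_0,\lambda_1,\ldots,\lambda_{d-1})\in[0,1)^d$; the vertex structure of $P_{m,d}$ translates the integrality conditions into $\lambda_1+\lambda_2\in\Z$, $m\lambda_k+\lambda_{k+1}\in\Z$ for $2\le k\le d-2$, and $m\lambda_{d-1}\in\Z$, with height equal to $\sum_{i=0}^{d-1}\lambda_i$. Split the count by whether $\lambda_{d-1}=0$ or $\lambda_{d-1}>0$. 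The $\lambda_{d-1}=0$ case gives exactly $h^\ast(P_{m,d-1};z)$, since the remaining integrality conditions match those defining $\Pi_{P_{m,d-1}}$ and heights are preserved. Writing the residual contribution as $g_d(z)$, we get $h^\ast(P_{m,d};z)=h^\ast(P_{m,d-1};z)+g_d(z)$.

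The crux is a height lower bound when $\lambda_{d-1}>0$. Set $x_i:=m\lambda_i+\lambda_{i+1}$ for $2\le i\le d-2$ and $x_{d-1}:=m\lambda_{d-1}$. A downward induction via $\lambda_i=(x_i-\lambda_{i+1})/m$ shows that $\lambda_{d-1}>0$ propagates $\lambda_i>0$ and hence $x_i\ge 1$ for every $i=2,\ldots,d-1$. In particular $\lambda_2>0$ combined with $\lambda_1+\lambda_2\in\Z$ and $\lambda_1,\lambda_2\in[0,1)$ forces $\lambda_1+\lambda_2=1$. Summing the identities defining $x_i$ over $i=2,\ldots,d-1$ telescopes to
\[
  m\lambda_2+(m+1)\sum_{i=3}^{d-1}\lambda_i \;=\; \sum_{i=2}^{d-1} x_i \;\ge\; d-2,
\]
and since $m\lambda_2<m$ we obtain $\sum_{i=3}^{d-1}\lambda_i > (d-2-m)/(m+1)$. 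Therefore the height satisfies
\[
  \sum_{i=0}^{d-1}\lambda_i \;=\; \lambda_0 + 1 + \sum_{i=3}^{d-1}\lambda_i \;>\; 1+\frac{d-2-m}{m+1},
\]
which tends to infinity with $d$ for fixed $m$.

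This estimate closes the argument. For fixed $j\ge 0$, once $d>j(m+1)$, the displayed bound rules out any lattice point with $\lambda_{d-1}>0$ having height at most $j$, so the coefficient of $z^j$ in $g_d(z)$ vanishes. Hence the coefficient of $z^j$ in $h^\ast(P_{m,d};z)$ agrees with the corresponding coefficient of $h^\ast(P_{m,d-1};z)$ for all sufficiently large $d$. Every coefficient therefore stabilizes, and $h^\ast(P_{m,d};z)$ converges in $\Z[[z]]$.

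The main obstacle is executing the downward cascade cleanly: one must verify that if $\lambda_{d-1}>0$, then the recursion $\lambda_i=(x_i-\lambda_{i+1})/m$ together with the box constraint $\lambda_i\in[0,1)$ genuinely propagates strict positivity and the integer lower bound $x_i\ge 1$ all the way down to $i=2$, and separately that the forced equality $\lambda_1+\lambda_2=1$ leaves exactly one value of $\lambda_0\in[0,1)$ producing an integer height. Once this bookkeeping is in place, the telescoping identity and the resulting linear-in-$d$ height bound are immediate.
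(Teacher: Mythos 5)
Your proof is correct, and it organizes the argument differently from the paper. The paper proves an explicit parameterization of all fundamental parallelepiped points (its Theorem~\ref{thm:mdfpp}): each nonzero point is determined by the choice of $\lambda_2$ with denominator $m^k$, the coordinates $\lambda_t$ vanish beyond index roughly $k$, and the height is bounded below by $\lfloor k/2\rfloor\frac{1}{m}$ via the pairwise estimate $\lambda_t+\lambda_{t+1}>\frac{1}{m}$ for consecutive nonzero coordinates. You instead peel off the last coordinate: the points with $\lambda_{d-1}=0$ reproduce $h^*(P_{m,d-1};z)$ exactly, and for the points with $\lambda_{d-1}>0$ you propagate strict positivity down to $\lambda_2$ and telescope the integrality conditions $x_i=m\lambda_i+\lambda_{i+1}\ge 1$ to get a height bound of $1+\frac{d-2-m}{m+1}$, which is linear in $d$ and in fact asymptotically sharper than the paper's $\lfloor (d-2)/2\rfloor\frac{1}{m}$ for the corresponding fully-supported points. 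Your route is leaner for the convergence statement alone and makes the nesting $h^*(P_{m,d};z)=h^*(P_{m,d-1};z)+g_d(z)$ explicit; the paper's heavier parameterization earns its keep later, since the explicit description of the coordinates (denominators, the pairing of $\lambda$-vectors, the value $\lambda_{k+1}=1/2$ when $m=2$) is what powers the exact enumeration in Theorem~\ref{thm:fkhenumeration} and hence the closed form of the limit in Theorem~\ref{thm:m2limit}. The one piece of bookkeeping you flag---that $\lambda_{i+1}\in(0,1)$ forces $x_i\in\Z_{\ge 1}$ and then $\lambda_i=(x_i-\lambda_{i+1})/m>0$, and that $\lambda_0$ is the unique element of $[0,1)$ making the height integral---does go through exactly as you describe, so there is no gap.
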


For $m=2$, we can find an explicit description of the limit.

\begin{definition}
  For positive integers $k$ and $h$, define the function
  \[
    f(k,h):=2\left[\binom{k-2}{3(h-1)-k}+\binom{k-2}{3(h-1)-k-1}+\binom{k-2}{3(h-1)-k-2}  \right] \, .
  \]
\end{definition}

\begin{theorem}\label{thm:m2limit}
  Denoting by $H(z)=\sum_{i=0}^\infty h_iz^i$ the Ehrhart limit of $P_{2,d}$, we have
  \[
    h_j=\sum_{k=\left\lceil \frac{3}{2}(h-1)\right\rceil}^{3h-3}f(k,h) \, .
  \]
  Thus, $H(z)= 1 +z+ 4z^2+ 20z^3 +84z^4+ 356z^5 +1508z^6 +6388z^7+ 27060z^8+ 114628z^9+ 485572z^{10}+\cdots$.
\end{theorem}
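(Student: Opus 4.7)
The plan is to apply equation~\eqref{eq:fppsum} to enumerate lattice points in $\Pi_{P_{2,d}}$ by height. First I would parametrize these points by a \emph{level}. Solving the bidiagonal congruences $\lambda_1 + \lambda_2 \in \Z$, $2\lambda_k + \lambda_{k+1} \in \Z$ for $2 \leq k \leq d-2$, and $2\lambda_{d-1} \in \Z$ recursively from the bottom upward forces $\lambda_2 = c/2^t$ for an odd integer $c$ with $0 \leq t \leq d-2$; the remaining $\lambda_k$ are then determined, with $\lambda_{t+1} = 1/2$ and $\lambda_k = 0$ for $k \geq t+2$. Level $t$ contains $2^{\max(t-1,0)}$ lattice points, which sum to the expected total of $2^{d-2}$. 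Writing $c = \sum_{i=0}^{t-1} c_i 2^i$ with $c_0 = 1$, a direct calculation yields $S := \lambda_1 + \cdots + \lambda_{d-1} = (t+1)/2 + \epsilon$, where $\epsilon$ is a bounded alternating sum in the bits $c_1, \ldots, c_{t-2}$. Crucially, $S$ is independent of the top bit $c_{t-1}$, which explains the factor of $2$ in the definition of $f(k,h)$.

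Next, I would use bounds on $\epsilon$ to show that level $t$ contributes to height $h = \lceil S \rceil$ exactly when $\lceil \tfrac{3}{2}(h-1) \rceil \leq t \leq 3h - 3$. This range is finite and independent of $d$, which simultaneously recovers the summation range in the theorem statement and reconfirms the convergence of $h^\ast(P_{2,d};z)$ specialized from Theorem~\ref{thm:mehrhartlimit}. The central combinatorial step is then to count level-$t$ lattice points at a given height $h$. For this I propose the bit-flip substitution $b_i := c_i \oplus a_i$, where $a_i = 1$ if $i \equiv t \pmod{2}$ and $a_i = 0$ otherwise. A direct computation shows that under this substitution, $S = S_0 + \sum_{i=1}^{t-2} \alpha_i b_i$ where each coefficient equals $\alpha_i = \tfrac{1}{3} \pm \tfrac{1}{3 \cdot 2^{t-1-i}}$ (positive in all cases), so the $\alpha_i$ are close to $1/3$ and approach $1/3$ geometrically as $t - i$ grows.

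The central claim is that $h = \lceil (|b| + t + 3)/3 \rceil$ depends only on the Hamming weight $|b|$, which immediately yields the binomial count: height $h$ is attained exactly when $|b| \in \{3h-t-5,\, 3h-t-4,\, 3h-t-3\}$, giving $\binom{t-2}{3h-t-3} + \binom{t-2}{3h-t-4} + \binom{t-2}{3h-t-5}$ choices of $b \in \{0,1\}^{t-2}$, and multiplying by the factor of $2$ for the free top bit $c_{t-1}$ produces $f(t,h)$. The main obstacle is verifying that $S(b)$ always falls in the correct unit interval $(h-1,h]$, since the $\alpha_i$ are unequal (though nearly uniform). This should follow from two estimates: (i) the spread of $S(b)$ over binary strings $b$ of fixed Hamming weight is bounded by $\sum_i |\alpha_i - 1/3| = \tfrac{1}{3}(1 - 2^{-(t-2)}) < 1/3$, and (ii) the constant $S_0$ is aligned so that the average value of $S(b)$ over weight-$w$ strings is close to $(w+t+3)/3$, with the slack small enough that the ceilings agree. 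I would finish by carrying out these geometric-series computations explicitly to convert the localization of $S(b)$ into the claimed combinatorial identity.
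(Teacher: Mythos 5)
Your plan is essentially the route the paper takes: Theorem~\ref{thm:m2limit} is reduced, via~\eqref{eq:fppsum}, to counting for each admissible $k$ (your $t$) the fundamental parallelepiped points of height $h$ whose $\lambda_2$ has denominator $2^k$, and showing that this count is $f(k,h)$ (this is Theorem~\ref{thm:fkhenumeration}). Your parametrization by the binary digits of the odd numerator of $\lambda_2$, followed by an XOR with an alternating mask, is an equivalent repackaging of the paper's parametrization by the set of indices $t\in\{0,\ldots,k-3\}$ with $\lambda_{k-t}>1/2$: in both versions there is a base configuration (the Jacobsthal pattern) contributing $1+\frac{k+1}{3}+\frac19\bigl[1-(-1/2)^{k-1}\bigr]$ to the coordinate sum, each flipped position adds $\frac13\bigl[1-(-1/2)^{k-t-2}\bigr]$ (your $\alpha_i$), the top bit is free and accounts for the factor of $2$ via the pairing $\lambda_1+\lambda_2=1$, and the height is $\lceil (w+k+3)/3\rceil$ as a function of the Hamming weight $w$ alone. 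Your identification of the summation range $\lceil\frac32(h-1)\rceil\le k\le 3h-3$ and the count $2^{k-1}$ of points per level also match the paper (Lemma~\ref{lem:sumfpowerof2}).

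The one place the sketch falls short of a proof is exactly the step you defer: verifying that $S(b)$ lands in the correct unit interval for every string of a given weight. Your estimate (i), that $\sum_i\lvert\alpha_i-\frac13\rvert=\frac13(1-2^{-(k-2)})<\frac13$, together with the exact value of $S_0$, suffices for two of the three residue classes of $w+k$ modulo $3$ (the paper's cases $x=0,1$) and for one side of the remaining case, but it is \emph{not} sufficient for the extremal case $x=2$ when $k$ is odd. There the paper needs an additional observation: the deviations $\alpha_i-\frac13$ alternate in sign with the parity of $k-t-2$, so at most about $\frac{k-2}{2}$ of them can conspire in the unfavorable direction; for a subset of weight $\alpha=3(h-1)-k-2$ this forces $\alpha\le\frac{k-2}{2}$, hence a bound relating $h$ to $k$, which is what yields the strict inequality in~\eqref{eq:bounded}. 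Without some such refinement, the union of the three weight-class intervals has length too close to $1$ for the naive spread bound to certify that it sits inside a single unit interval, so expect to need this (or an equivalent) extra argument when you carry out the final computation.
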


\begin{proof}[Proof of Theorem~\ref{thm:m2limit}]
Theorem~\ref{thm:m2limit} follows directly from~\eqref{eq:fppsum} and Theorem~\ref{thm:fkhenumeration} below.
\end{proof}

\begin{remark}
  Observe that for $i\geq 4$, the coefficients in this power series satisfy the linear recursion $h_i=4h_{i-1}+h_{i-2}$.
\end{remark}

Our proofs of Theorem~\ref{thm:mehrhartlimit} and~\ref{thm:m2limit} rely on a careful analysis of the lattice points in the fundamental parallelepiped of $P_{m,d}$.

\begin{theorem}\label{thm:mdfpp}
  Every lattice point in the fundamental parallelepiped of $P_{m,d}$ corresponds uniquely to a value, denoted $\lambda_2$, of the following form:
  \[
    \lambda_2 := \frac{\ell_2m^{k-1}+mi_2+j_2}{m^{k}} 
  \]
  where $0<k\leq d-2$, $0\leq \ell_2<m$, $0\leq i_2<m^{k-2}$, and $0<j_2<m$.
\end{theorem}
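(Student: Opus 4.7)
The plan is to show that the lattice-point condition reduces to a recursion in which $\lambda_2$ determines everything else, and then to translate the endpoint constraint of that recursion into the parameterization stated in the theorem.

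First, I would read the coordinates of $\sum_{i=1}^d \lambda_i(1,v_i)$ directly off the matrix in Definition~\ref{def:Pmd}: $v_2 = e_1$, $v_j = e_{j-2} + m e_{j-1}$ for $j \geq 3$, and $v_1 = 0$. Requiring integer coordinates yields $\lambda_2 + \lambda_3 \in \Z$ (first coordinate), $m\lambda_{j+1} + \lambda_{j+2} \in \Z$ for $2 \leq j \leq d-2$ (intermediate coordinates), $m\lambda_d \in \Z$ (last coordinate), together with $\sum_i \lambda_i \in \Z$ (height). Since each $\lambda_i$ lies in $[0,1)$, these congruences uniquely determine each successive $\lambda$ from its predecessor: $\lambda_3 \equiv -\lambda_2 \pmod{1}$, and then $\lambda_{j+2} \equiv -m\lambda_{j+1} \pmod{1}$ for $j \geq 2$. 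A routine induction gives the closed form $\lambda_j \equiv (-1)^j m^{j-3}\lambda_2 \pmod{1}$ for $j \geq 3$, and $\lambda_1$ is pinned down by the height condition. Hence $\lambda_2$ uniquely determines the lattice point.

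Next, substituting the closed form into $m\lambda_d \in \Z$ yields $m^{d-2}\lambda_2 \in \Z$, so any valid $\lambda_2$ has the form $a/m^{d-2}$ with $a \in \{0, 1, \ldots, m^{d-2}-1\}$. For a nonzero such $\lambda_2$, let $k$ be the smallest positive integer with $m^k \lambda_2 \in \Z$; then $1 \leq k \leq d-2$ and $b := m^k \lambda_2$ is a positive integer satisfying $b < m^k$ and $m \nmid b$. Writing $b$ in base $m$ (allowing leading zeros to fill $k$ digits), the coefficient of $m^{k-1}$ is some $\ell_2 \in [0,m)$, the coefficient of $m^0$ is some $j_2 \in (0,m)$ (nonzero precisely because $m \nmid b$), and the middle coefficients assemble into an integer $i_2 \in [0, m^{k-2})$. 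Substituting recovers $\lambda_2 = (\ell_2 m^{k-1} + m i_2 + j_2)/m^k$ with the ranges claimed.

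The main obstacle is careful bookkeeping of the edge cases $k \leq 2$, where the middle-digit block is empty and the decomposition $b = \ell_2 m^{k-1} + m i_2 + j_2$ becomes degenerate. For $k = 2$ the constraint $0 \leq i_2 < 1$ forces $i_2 = 0$, giving the standard two-digit expansion $b = \ell_2 m + j_2$. For $k = 1$ the coefficients of $m^{k-1}$ and $m^0$ coincide; taking $\ell_2 = 0$ and $j_2 = b$ keeps the parameterization within the stated ranges. Reversing the construction verifies that every $\lambda_2$ of the stated form arises from a genuine lattice point, with the origin corresponding to $\lambda_2 = 0$ as a separate trivial case, confirming the bijective correspondence between lattice points and $\lambda_2$ values.
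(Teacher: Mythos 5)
Your proof is correct, but it reaches the conclusion by a somewhat different mechanism than the paper. The paper's proof runs the chain of integrality conditions \emph{forward}: it posits $\lambda_2$ in the stated $(\ell_2,i_2,j_2)$ form, shows that each successive $\lambda_t$ is again of that form with the power of $m$ in the denominator dropping by one at each step until the chain terminates, and then establishes that these are \emph{all} the lattice points by counting $\sum_{k=1}^{d-2}(m^k-m^{k-1})=m^{d-2}-1$ and comparing with the determinant $m^{d-2}$. You instead derive the closed-form congruence $\lambda_j\equiv(-1)^jm^{j-3}\lambda_2\pmod 1$, read off from the terminal condition that the free coefficient lies in $\frac{1}{m^{d-2}}\Z$, and obtain the stated parameterization as the base-$m$ digit expansion of the reduced numerator; this gives the bijection directly without invoking the volume count, and you are more careful than the paper about the degenerate cases $k\in\{1,2\}$, where the decomposition $b=\ell_2m^{k-1}+mi_2+j_2$ is not literally unique unless one fixes a convention such as $\ell_2=0$ when $k=1$ (the paper glosses over this). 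Two small cautions. First, your indices are shifted by one relative to the paper: your distinguished coefficient (your $\lambda_2$, attached to the vertex $e_1$) is the paper's $\lambda_1$, whereas the paper's $\lambda_2$ is attached to the vertex $e_1+me_2$ and satisfies $m\lambda_2+\lambda_3\in\Z$; the two are interchangeable here since $\lambda_1+\lambda_2\in\Z$ forces $\lambda_1=1-\lambda_2$ on nonzero points and this map preserves the stated form, but the later arguments (Theorems~\ref{thm:mehrhartlimit} and~\ref{thm:fkhenumeration}) lean on the paper's convention, so you should align with it. Second, the step ``reversing the construction'' deserves one explicit sentence: given $\lambda_2=a/m^{d-2}$, the forward recursion produces all $\lambda_j\in[0,1)$ and the only condition not automatic is the terminal one $m\lambda_d\in\Z$, which holds precisely because $m^{d-2}\lambda_2\in\Z$ --- the same computation you already did in the forward direction.
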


\begin{proof}
  Suppose that
  \begin{equation}\label{eq:fppmatrix}
    \left[
      \begin{array}{cccccccccc}
        1 & 1 & 1 & 1 & 1 & 1 & 1 & \cdots & 1 \\
        0 & 1 & 1 & 0 & 0 & 0 & 0 & \cdots & 0 \\
        0 & 0 & m & 1 & 0 & 0 & 0 & \cdots & 0 \\
        0 & 0 & 0 & m & 1 & 0 & 0 & \cdots & 0 \\
        0 & 0 & 0 & 0 & m & 1 & 0 & \cdots & 0 \\
        0 & 0 & 0 & 0 & 0 & m & 1 & \cdots & 0 \\
        \vdots & \vdots & \vdots &  \vdots & \vdots & \vdots & \ddots & \ddots &\vdots \\
        0 & 0 & 0 & 0 & 0 & 0 & 0 & \ddots & 1 \\
        0 & 0 & 0 & 0 & 0 & 0 & 0 & \cdots & m \\ 
      \end{array}
    \right]
    \left[
      \begin{array}{c}
        \lambda_0 \\
        \lambda_1 \\
        \lambda_2 \\
        \lambda_3 \\
        \lambda_4 \\
        \vdots  \\
        \lambda_{d-2} \\
        \lambda_{d-1} \\
      \end{array}
    \right]
    = \bp
    \in
    \Z^d
  \end{equation}
  is a lattice point in the fundamental parallelepiped for $P_{m,d}$, where each $0\leq\lambda_i<1$.
  Since the determinant of this matrix is $m^{d-2}$, there are $m^{d-2}$ lattice points in the fundamental parallelepiped.
  We will construct these lattice points explicitly, hence proving the result.

  Fix $0<k\leq d-2$, and set
  \[
    \lambda_2 := \frac{\ell_2m^{k-1}+mi_2+j_2}{m^{k}} 
  \]
  where $0\leq \ell_2<m$, $0\leq i_2<m^{k-2}$, and $0<j_2<m$.
  Note that there are $m^k-m^{k-1}$ such values of $\lambda_2$.
  From the third line of the matrix multiplication producing $\bp$, we see that
  \[
    m \frac{\ell_2m^{k-1}+mi_2+j_2}{m^{k}}+ \lambda_3 \in \Z
  \]
  which implies
  \[
    \frac{mi_2+j_2}{m^{k-1}}+ \lambda_3 \in \Z
  \]
  and since by definition $0<mi_2+j_2<m^{k-1}$, we have
  \[
    \lambda_3 = 1 - \frac{mi_2+j_2}{m^{k-1}} = \frac{m^{k-1}-(mi_2+j_2)}{m^{k-1}} \, .
  \]

  We now rewrite the numerator in the fraction for $\lambda_3$; there exists $0\leq \ell_3<m$, $0\leq i_3<m^{k-3}$, and $0<j_3<m$ such that
  \[
    m^{k-1}-(mi_2+j_2) = \ell_3m^{k-2}+mi_3+j_3  \, ,
  \]
  and we have
  \[
    \lambda_3 = \frac{\ell_3m^{k-2}+mi_3+j_3}{m^{k-1}}\, .
  \]
  We can now use the fourth line in the matrix equation above, following the same process as we used for $\lambda_2$, to find
  \[
    \lambda_4 = \frac{m^{k-2}-(mi_3+j_3)}{m^{k-2}} = \frac{\ell_4m^{k-3}+mi_4+j_4}{m^{k-2}}
  \]
  with $0\leq \ell_4<m$, $0\leq i_4<m^{k-4}$, and $0<j_4<m$.
  Continuing in this fashion, we construct a unique sequence $\{\lambda_t:t\geq 2\}$ where
  \[
    \lambda_t = \frac{m^{k-t+2}-(mi_{t-1}+j_{t-1})}{m^{k-t+2}} = \frac{\ell_tm^{k-t+1}+mi_t+j_t}{m^{k-t+2}}
  \]
  for certain $0\leq \ell_t<m$, $0\leq i_t<m^{k-t}$, and $0<j_t<m$ that are determined uniquely by the process.
  Note that when $t=k-1$ we have
  \[
    \lambda_{k-1}= \frac{j_t}{m}
  \]
  where $0<j_t<m$, and thus for $t\geq k$ we have $\lambda_t=0$.
  This sequence also uniquely determines $\lambda_1$ and $\lambda_0$.

  We must have $k-3<d$ which implies that $1\leq k\leq d-2$.
  Further, each choice of $k$ and an initial value of $\lambda_2$ yields a unique lattice point.
  Since there are $m^k-m^{k-1}$ such points where the denominator of $\lambda_2$ is $m^k$, that means that this yields a total of $\sum_{k=1}^{d-2}(m^k-m^{k-1})=m^{d-2}-1$ lattice points.
  Setting all $\lambda_i$ equal to $0$ yields the $m^{d-2}$-th lattice point in the fundamental parallelepiped.
\end{proof}

\begin{proof}[Proof of Theorem~\ref{thm:mehrhartlimit}]
  Using the notation from the proof of Theorem~\ref{thm:mdfpp}, observe that for any fixed $k$, if $\lambda_2$ has denominator $m^k$ and $k\leq d-2$, then for any such $d$ we obtain the same sequence of coefficients $\lambda_0,\lambda_1,\lambda_2,\ldots$, and the height of the corresponding lattice point in the fundamental parallelepiped is the sum of these coefficients.
  Thus, if we can show that the height of such a fundamental parallelepiped point is bounded below by an increasing function of $k$, it will follow that the coefficients of the $h^*$-polynomial of $P_{m,d}$ stabilize as $d\to \infty$.

  Recall that $0\leq i_t<m^{k-t}$ and $0<j_t<m$, thus $0<mi_t+j_t<m^{k-t+1}$.
  Note that for any $t\geq 2$ such that $\lambda_t\neq 0$ and $\lambda_{t+1}\neq 0$, we have
  \begin{align*}
    \lambda_t+\lambda_{t+1}&= \frac{\ell_tm^{k-t+1}+mi_t+j_t}{m^{k-t+2}} + \frac{\ell_{t+1}m^{k-t}+mi_{t+1}+j_{t+1}}{m^{k-t+1}}\\
                           & = \frac{\ell_tm^{k-t+1}+mi_t+j_t}{m^{k-t+2}} + \frac{m^{k-t+1}-(mi_t+j_t)}{m^{k-t+1}} \\
                           & = 1 + \frac{\ell_tm^{k-t+1}-(m-1)(mi_t+j_t)}{m^{k-t+2}}  \\
                           & > 1 + \frac{\ell_tm^{k-t+1}-(m-1)m^{k-t+1}}{m^{k-t+2}}  \\
                           & = 1 + \frac{\ell_t-(m-1)}{m}  \\
                           & > 1 - \frac{m-1}{m}  \\
                           & = \frac{1}{m} \, .
  \end{align*}
  Thus, for any point defined by coefficients where $\lambda_2$ has denominator $m^k$, it follows that the height of the point is at least $\lfloor k/2 \rfloor\frac{1}{m}$.
  Hence, for fixed degree $r$, as $d\to \infty$ the coefficient of $z^r$ in $h^*(P_{m,d};z)$ is eventually constant.
\end{proof}

Considering the special case of $m=2$, Theorem~\ref{thm:mehrhartlimit} implies that when $h$ is fixed, every lattice point at height $h$ in the fundamental parallelepiped of $P_{2,d}$ with $\lambda_2=b/2^k$ for an odd value of $b$ satisfies $\lfloor k/2 \rfloor\frac{1}{2}\leq h$.
This implies that as $d$ grows, the number of fundamental parallelepiped points for $P_{2,d}$ at height $h$ stabilizes and can be found among those points having a bounded denominator of $\lambda_2$.
To prove Theorem~\ref{thm:m2limit}, we establish an explicit description of the Ehrhart limit of $P_{2,d}$ by enumerating the set of possible fundamental parallelepiped points for arbitrary $P_{2,d}$ having a given height and a given denominator of $\lambda_2$.
Note that
\[
  \left\lceil \frac{3}{2}(h-1)\right\rceil \leq k \leq 3h-3
\]
if and only if
\[
  \left\lceil\frac{k+3}{3}\right\rceil \leq h \leq \left\lfloor \frac{2k+3}{3}\right\rfloor \, ,
\]
establishing a connection between Theorem~\ref{thm:m2limit} and the following lemma.

\begin{lemma}\label{lem:sumfpowerof2}
  For a positive integer $k$, we have
  \[
\sum_{h=\left\lceil\frac{k+3}{3}\right\rceil}^{\left\lfloor \frac{2k+3}{3}\right\rfloor}f(k,h)=2^{k-1} \, .
    \]
  \end{lemma}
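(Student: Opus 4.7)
The plan is to re-index the sum by the substitution $j := 3(h-1) - k$, which rewrites
\[
  f(k,h) = 2\left[\binom{k-2}{j} + \binom{k-2}{j-1} + \binom{k-2}{j-2}\right].
\]
The key observation is that as $h$ increments by $1$, the parameter $j$ increments by $3$. Consequently, the three indices $j-2$, $j-1$, $j$ appearing inside a single $f(k,h)$ combine with the analogous triples from the neighboring summands to tile a contiguous range of integers, with no overlaps and no gaps.

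To pin down that range, I would split into three cases according to the residue of $k$ modulo $3$ and use the bounds $h \in [\lceil (k+3)/3 \rceil,\, \lfloor (2k+3)/3 \rfloor]$ to read off the smallest and largest values of $j$ directly. A short computation shows that the minimum of $j$ is $0$, $2$, or $1$ and the maximum is $k$, $k-2$, or $k-1$ according as $k \equiv 0, 1, 2 \pmod 3$. In each case the union $\bigcup_h \{j-2,\, j-1,\, j\}$ is a block of consecutive integers that contains $\{0, 1, \ldots, k-2\}$; any additional boundary indices that appear lie outside $[0, k-2]$, where $\binom{k-2}{\cdot} = 0$ and therefore contribute nothing.

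Once this bookkeeping is complete, the sum collapses via the binomial theorem to
\[
  \sum_{h=\lceil (k+3)/3 \rceil}^{\lfloor (2k+3)/3 \rfloor} \left[\binom{k-2}{j} + \binom{k-2}{j-1} + \binom{k-2}{j-2}\right] \;=\; \sum_{i=0}^{k-2} \binom{k-2}{i} \;=\; 2^{k-2},
\]
and multiplying by the leading factor of $2$ in the definition of $f(k,h)$ produces $2^{k-1}$, as required.

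The only real obstacle is executing the modular case analysis without an off-by-one slip at either endpoint; since the argument rests on the tiling of a contiguous integer range by disjoint triples, an error of $1$ at either end would either exclude a nontrivial binomial coefficient from the sum or spuriously include a boundary index at which the binomial happens also to be nontrivial. The substance of the proof is the single step of noticing that the common difference of the $j$-progression exactly matches the size of the triples; everything else is routine.
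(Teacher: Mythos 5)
Your proposal is correct and takes essentially the same route as the paper's proof: a case split on $k \bmod 3$ followed by the observation that the triples of binomial coefficients indexed by $j=3(h-1)-k$ tile a contiguous range covering $\{0,1,\ldots,k-2\}$ (with any extra boundary indices contributing vanishing binomials), so the sum collapses to $2\cdot 2^{k-2}=2^{k-1}$. Your endpoint values for $j$ in each residue class are correct, so the bookkeeping you flagged as the only risk does go through.
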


  \begin{proof}
    Write $k=3q+r$ with $0\leq r <3$, and proceed by cases for $r=0,1,2$.
    When $r=0$, we have that $k=3q$, implying that the index of summation ranges over $q+1\leq h\leq 2q+1$.
    Evaluating $f(k,h)$ at these values of $h$ shows that the right-hand side above is equal to
    \[
2\sum_{i=0}^{3q-2}\binom{3q-2}{i} = 2\cdot 2^{3q-2}=2^{k-1}
\]
and thus this case is complete.
The remaining two cases follow similarly.
\end{proof}

Thus, the summation in Lemma~\ref{lem:sumfpowerof2} ranges over pairs $(k,h)$ for fixed $k$ while the summation in Theorem~\ref{thm:m2limit} ranges over pairs $(k,h)$ with fixed $h$, but the set of possible pairs appearing in any of these summations is the same.
Our next result directly implies Theorem~\ref{thm:m2limit}.

\begin{theorem}\label{thm:fkhenumeration}
  Fix $k$ and $h$ such that $\left\lceil \frac{3}{2}(h-1)\right\rceil \leq k \leq 3h-3$.
  For sufficiently large $d$, the number of fundamental parallelepiped points for $P_{2,d}$ having height $h$ and $\lambda_2=b/2^k$ is equal to $f(k,h)$.
\end{theorem}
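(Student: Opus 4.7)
The plan is to combine the parameterization of fundamental parallelepiped points from Theorem~\ref{thm:mdfpp} with an explicit formula for the height as a function of those parameters. Specializing to $m=2$, every FPP point with $\lambda_2=b/2^k$ (odd $b$) is encoded by a bit string $(b_{k-1},\ldots,b_1)\in\{0,1\}^{k-1}$ via the binary expansion $N_2=b_{k-1}b_{k-2}\cdots b_1 1$ of the numerator of $\lambda_2$. The first step is to prove by induction on $j$, using the recursion $N_{t+1}=2^{k-t+1}-(N_t\bmod 2^{k-t+1})$ implicit in Theorem~\ref{thm:mdfpp}, that the $(k-j)$-bit binary representation of $N_{2+j}$ equals $b_{k-1-j}\cdots b_1 1$ for $j$ even and $\bar b_{k-1-j}\cdots \bar b_1 1$ for $j$ odd. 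In particular the leading bit $\ell_{2+j}$ of $N_{2+j}$ equals $b_{k-1-j}$ or $\bar b_{k-1-j}$ according to the parity of $j$, so $L:=\sum_{t=2}^k\ell_t$ is a sum of $k-1$ bits, each a bijective image of a distinct $b_i$.

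The second step is to derive the height formula. From the matrix equation we read off $2\lambda_t+\lambda_{t+1}=1+\ell_t$ for $2\le t\le k$, together with $\lambda_1+\lambda_2=1$ and the terminal value $\lambda_{k+1}=1/2$. Summing these relations and telescoping produces
\[
  3\sum_{t=2}^{k+1}\lambda_t \;=\; k+L+\lambda_2,
\]
and since the height is the unique integer $h$ satisfying $h-1<\lambda_1+\sum_{t=2}^{k+1}\lambda_t\le h$, this simplifies to
\[
  h \;=\; \Bigl\lceil\tfrac{k+L+3-2\lambda_2}{3}\Bigr\rceil.
\]

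The third step is a four-case analysis based on the residue of $k+L$ modulo $3$ together with the sign of $\lambda_2-\tfrac{1}{2}$ (equivalently, the value of $b_{k-1}$). Each case pins down a specific value of $L$ as a linear function of $k$ and $h$, and in two of the four cases also fixes $b_{k-1}$: the cases $k+L\equiv 0$ and $k+L\equiv 2\pmod 3$ give $L=3(h-1)-k$ and $L=3(h-1)-k-1$ respectively with no constraint on $b_{k-1}$, while $k+L\equiv 1\pmod 3$ splits into $L=3(h-1)-k+1$ with $b_{k-1}=1$ and $L=3(h-1)-k-2$ with $b_{k-1}=0$. Because each $b_i$ contributes bijectively to a corresponding $\epsilon_i$-summand of $L$, the number of admissible bit strings with a prescribed $L$ is $\binom{k-1}{L}$ when $b_{k-1}$ is unconstrained, and $\binom{k-2}{L-1}$ or $\binom{k-2}{L}$ when $b_{k-1}$ is fixed to $1$ or $0$ respectively. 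Adding the four contributions and applying Pascal's identity $\binom{k-1}{r}=\binom{k-2}{r}+\binom{k-2}{r-1}$ twice collapses the total to
\[
  2\Bigl[\binom{k-2}{3(h-1)-k}+\binom{k-2}{3(h-1)-k-1}+\binom{k-2}{3(h-1)-k-2}\Bigr]=f(k,h).
\]

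The main obstacle is establishing the telescoping identity $3\sum_{t=2}^{k+1}\lambda_t=k+L+\lambda_2$, which is the pivot that reduces an unwieldy sum of fractions with rapidly varying denominators to a simple linear expression in the single integer statistic $L$ and the single fraction $\lambda_2$. Once this identity is secured, the four-case analysis and the binomial consolidation via Pascal's identity are essentially routine, provided one verifies carefully that the parity-dependent complementation in the first step really yields a bijection of $\{0,1\}^{k-1}$ under which the condition $\lambda_2<\tfrac{1}{2}$ transfers to a condition on the corresponding $\epsilon$-variable.
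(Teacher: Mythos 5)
Your proof is correct, and although it rests on the same combinatorial parametrization as the paper's --- points with $\lambda_2=b/2^k$ are indexed by the leading bits $\ell_2,\ldots,\ell_k$ of the successive numerators, i.e., by which $\lambda_t$ exceed $1/2$, with the bit $\ell_2=b_{k-1}$ accounting for the overall factor of $2$ --- the way you compute the height is genuinely different and cleaner. The paper writes down a closed form for each $\lambda_t$ in the all-zeros case (the Jacobsthal numerators), expresses the height as the ceiling of a base term $1+\frac{k+1}{3}+\frac{1}{9}\left[1-(-1/2)^{k-1}\right]$ plus a correction $\frac{1}{3}\left[1-(-1/2)^{k-t_r-2}\right]$ for each leading bit equal to $1$, and then must bound the resulting fractional part in $[0,1)$ by a case analysis that is delicate when $x=2$ and $k$ is odd. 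Your telescoping identity $3\sum_{t=2}^{k+1}\lambda_t=k+L+\lambda_2$, obtained by summing the row relations $2\lambda_t+\lambda_{t+1}=1+\ell_t$ for $2\le t\le k$ (which do hold: $\ell_t$ is exactly the integer part of $2\lambda_t$, and the two's-complement recursion makes $(\ell_2,\ldots,\ell_k)$ a coordinatewise bijective image of $(b_{k-1},\ldots,b_1)$), replaces all of that bounding with the single observation that $2\lambda_2\in(0,2)$ is never an integer, so the height depends only on $k+L\bmod 3$ and on $\ell_2$; Pascal's identity then collapses your four cases to $f(k,h)$. A further structural difference is that the paper must invoke Lemma~\ref{lem:sumfpowerof2} to certify that its constructed points exhaust all points with denominator $2^k$, whereas your case analysis runs over all $2^{k-1}$ bit strings by construction, so that lemma becomes a corollary of your argument rather than an input to it. The only piece you should still write out in full is the inductive two's-complement step establishing the parity-alternating complementation of the bit string, since that is what guarantees the counts $\binom{k-1}{L}$ and $\binom{k-2}{L-\ell_2}$.
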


\begin{proof}
  Given that
  \[
    f(k,h)=2\left[\binom{k-2}{3(h-1)-k}+\binom{k-2}{3(h-1)-k-1}+\binom{k-2}{3(h-1)-k-2}  \right]
  \]
  and
  \[
\sum_{h=\left\lceil\frac{k+3}{3}\right\rceil}^{\left\lfloor \frac{2k+3}{3}\right\rfloor}f(k,h)=2^{k-1} \, ,
\]
our procedure is to find a bijection from pairs of fundamental parallelepiped points having height $h$ and $\lambda_2=b/2^k$ to $(3(h-1)-k-x)$-subsets of $\{0,1,\ldots,k-3\}$ where $x=0,1,2$.
Following the notation of the proof of Theorem~\ref{thm:m2limit}, let $(\lambda_0,\lambda_1,\ldots,\lambda_{k+1})$ denote the coefficient vector of a fundamental parallelepiped point.
Note that it is always the case that $\lambda_{k+1}=1/2$, and note that $\lambda_0$ is determined by the other values.
Further, since $\lambda_2+\lambda_1=1$, we observe that our $\lambda$-vectors arise in pairs having the same height: if $\lambda_2<1/2$, we have a pair
\[
(\lambda_0,\lambda_1,\lambda_2,\ldots,\lambda_{k+1}),(\lambda_0,\lambda_1-1/2,\lambda_2+1/2,\ldots,\lambda_{k+1}) \, .
\]
Thus, we can restrict our attention to only those $\lambda$'s having $\lambda_2<1/2$.

Consider the sub-vector $\lambda^s=(\lambda_3,\lambda_4,\ldots,\lambda_k)$, which has $k-2$ entries.
Given a set $B\subseteq \{0,1,2,\ldots,k-3\}$ of size $(3(h-1)-k-x)$ where $x=0,1,2$, we claim that there is a unique $\lambda$ of height $h$ where $\lambda^s$ has exactly $(3(h-1)-k-x)$ entries greater than $1/2$.
From this, Lemma~\ref{lem:sumfpowerof2} implies that these are all possible fundamental parallelepiped points where the denominator of $\lambda_2$ is $2^k$, and our proof will be complete.

If there are no elements of $\lambda^s$ greater than $1/2$, then it is a straightforward exercise, starting from $\lambda_{k+1}=1/2$, to deduce from~\eqref{eq:fppmatrix} that for $j=-1,0,\ldots,k-2$, we have
\[
\lambda_{k-j}=\frac{1}{2^{j+2}}\sum_{i=0}^{j+1}(-1)^i2^{j+1-i} \, .
\]
(An interesting aside: the resulting numerators of these fractions form the \emph{Jacobsthal sequence}~\cite{oeisjacobsthal}.)
Thus, applying various manipulations of finite geometric series yields that
\begin{equation}\label{eq:mu}
\lambda_1+\lambda_2+\cdots+\lambda_{k+1}=1+\frac{k+1}{3}+\frac{1}{9}\left[1-(-1/2)^{k-1}\right] \, .
  \end{equation}
  If there exists some $t\in\{0,1,\ldots,k-3\}$ such that $\lambda_{k-t}>1/2$, then the matrix equation~\eqref{eq:fppmatrix} forces us to add to~\eqref{eq:mu} the value
  \[
2^{t+1}\sum_{i=0}^{k-t-3}(-1/2)^i = \frac{1}{3}\left[ 1-(-1/2)^{k-t-2}\right] \, ,
\]
since the addition of $1/2$ to $\lambda_{k-t}$ propagates through the subsequent the values of $\lambda$.
Thus, if $\lambda^s$ has $\alpha$ entries greater than $1/2$, i.e., if $\lambda_{k-t_r}>1/2$ for $r=1,2,\ldots,\alpha$, then the height of the fundamental parallelepiped point defined by $\lambda$ is the ceiling of
\begin{equation}\label{eq:height}
  1+\frac{k+1}{3}+\frac{1}{9}\left[1-(-1/2)^{k-1}\right] + \sum_{r=1}^\alpha \frac{1}{3}\left[ 1-(-1/2)^{k-t_r-2}\right] \, .
\end{equation}
If $\alpha=3(h-1)-k-x$ for $x=0,1,2$, then~\eqref{eq:height} becomes
\begin{equation}\label{eq:finalheight}
h-\frac{x+1}{3}+\frac{1}{9}\left[1-(-1/2)^{k-1}\right]-\frac{1}{3}\sum_{r=1}^\alpha(-1/2)^{k-t_r-2} \, .
\end{equation}
To verify that this point has height $h$, we must show that
\begin{equation}\label{eq:bounded}
\frac{x+1}{3}-\frac{1}{9}\left[1-(-1/2)^{k-1}\right]+\frac{1}{3}\sum_{r=1}^\alpha(-1/2)^{k-t_r-2}
  \end{equation}
  is contained in the interval $[0,1)$.

  To prove this, first observe that the smallest value that $k-t_r-2$ can have is $1$.
  Thus, by summing only the largest-magnitude odd- and even-power terms that might arise in the sum, respectively, we obtain
  \begin{equation}\label{eq:bounds}
    \frac{-1}{6}\left(1-(1/4)^\alpha\right)\leq \sum_{r=1}^\alpha(-1/2)^{k-t_r-2} \leq \frac{1}{3}\left(1-(1/4)^\alpha\right) \, .
  \end{equation}
  We next consider three cases.
  If $x=0$ or $x=1$, then applying~\eqref{eq:bounds} to~\eqref{eq:bounded}, it follows immediately that the value is contained in $[0,1)$.

  If $x=2$, the lower bound in~\eqref{eq:bounds} applied to~\eqref{eq:bounded} immediately yields the desired lower bound.
  However, the upper bound is slightly more delicate.
  When $k$ is even, then regardless of the value of $\alpha$, we obtain by applying the upper bound in~\eqref{eq:bounds} to~\eqref{eq:bounded} the inequality
  \[
1-\frac{1}{9}(1-(-1/2))^{k-1}+\frac{1}{9}(1-(1/4)^\alpha)=1-(1/2)^{k-1}-(1/4)^\alpha<1 \, ,
\]
and thus for even $k$ the upper bound is immediate.

For the case where $k$ is odd, assume $k=2i+1$ for some $i\geq 0$, and set $\alpha=3(h-1)-k-2$.
Note that since $\{t_1,\ldots,t_\alpha\}\subset \{0,1,\ldots,k-3\}$, to obtain the largest possible value of~\eqref{eq:bounded} we can assume all the summands in $\displaystyle \sum_{r=1}^\alpha(-1/2)^{k-t_r-2}$ are positive and thus $\alpha\leq \frac{k-2}{2}$.
This implies that $3(h-1)-k-2 \leq \frac{k-2}{2}$.
Setting $k=2i+1$ implies that $h\leq i+11/6$.
Since $h$ is an integer, we thus have $i\leq h+1$.
Thus, applying $x=2$, $k=2i+1$, and $i\leq h+1$ to~\eqref{eq:bounded} yields that
\[
  \frac{x+1}{3}-\frac{1}{9}\left[1-(-1/2)^{k-1}\right]+\frac{1}{3}\sum_{r=1}^\alpha(-1/2)^{k-t_r-2} \leq 1-7(1/4)^i< 1 \, .
\]
Thus, our proof is complete.  
\end{proof}

\subsection{Multidiagonal Matrices}\label{sec:multidiagcase}

The matrices defining the simplices in Theorem~\ref{thm:mehrhartlimit} are closely related to the following more general class.

\begin{definition}\label{def:Pmd}
  Suppose $\ba=(a_1,\ldots,a_s)\in \Z_{\geq 1}$ with $a_1>a_j$ for all $2\leq j\leq s$.
  For $d\geq s$, we define the \emph{$\ba$-multidiagonal Hermite normal form simplex}, denoted by $P(\ba;d)=P(a_1,\ldots,a_s;d)$, to be the $d$-dimensional simplex with vertices given by the columns of the following $d\times (d+1)$ matrix:
  
\[
\left[
    \begin{array}{cccccccccc}
      0 & a_1 & a_2 & \cdots & a_s & 0 & 0 & \cdots & 0 \\
      0 & 0 & a_1 & a_2 & \cdots & a_s & 0 & \cdots & 0 \\
      0 & 0 & 0 & a_1 & a_2 & \cdots & a_s & \cdots & 0 \\
      0 & 0 & 0 & 0 & a_1 & a_2 & \cdots & \ddots & 0 \\
      0 & 0 & 0 & 0 & 0 & a_1 & a_2 & \cdots & a_s \\
      \vdots & \vdots & \vdots &  \vdots & \vdots & \vdots & \ddots & \ddots &\vdots  \\
      0 & 0 & 0 & 0 & 0 & 0 & 0 & \ddots & a_2 \\
      0 & 0 & 0 & 0 & 0 & 0 & 0 & \cdots & a_1 \\
    \end{array}
  \right] \, .
\]
\end{definition}

Our main result of this subsection is the following.

\begin{theorem}\label{thm:multilimit}
  Suppose that $\gcd(a_1,a_2)=1$ and
  \[
\left[
  \begin{array}{cccccccccc}
    1 & 1 & 1 & \cdots & 1 & 1 & 1 & \cdots & 1 \\
      0 & a_1 & a_2 & \cdots & a_s & 0 & 0 & \cdots & 0 \\
      0 & 0 & a_1 & a_2 & \cdots & a_s & 0 & \cdots & 0 \\
      0 & 0 & 0 & a_1 & a_2 & \cdots & a_s & \cdots & 0 \\
      0 & 0 & 0 & 0 & a_1 & a_2 & \cdots & \ddots & 0 \\
      0 & 0 & 0 & 0 & 0 & a_1 & a_2 & \cdots & a_s \\
      \vdots & \vdots & \vdots &  \vdots & \vdots & \vdots & \ddots & \ddots &\vdots  \\
      0 & 0 & 0 & 0 & 0 & 0 & 0 & \ddots & a_2 \\
      0 & 0 & 0 & 0 & 0 & 0 & 0 & \cdots & a_1 \\
    \end{array}
  \right]
  \left[
      \begin{array}{c}
        \lambda_0 \\
        \lambda_1 \\
        \lambda_2 \\
        \lambda_3 \\
        \lambda_4 \\
        \vdots  \\
        \lambda_{d-1} \\
        \lambda_{d} \\
      \end{array}
    \right]
    = \left[
      \begin{array}{c}
        p_0 \\
        p_1 \\
        p_2 \\
        p_3 \\
        p_4 \\
        \vdots  \\
        p_{d-1} \\
        p_{d} \\
      \end{array}
    \right]
    =
    \bp
    \in
    \Z^{d+1}
  \]
  is a point in the fundamental parallelepiped for $P(\ba;d)$.
  If $k$ is the largest index such that $\lambda_k\neq 0$, then
  \[
    p_0\geq \left\lfloor \frac{k}{s}\right\rfloor \frac{1}{a_1}
  \]
  (note that $p_0$ is the height of $\bp$ in $\Pi_{P(\ba;d)}$).
  Thus, as $d\to \infty$, $P(\ba;d)$ converges to an Ehrhart limit.
\end{theorem}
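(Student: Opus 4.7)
The plan is to mirror the proof of Theorem~\ref{thm:mehrhartlimit}: first bound $p_0$ below by a quantity growing with $k$, then conclude coefficient-wise convergence. The starting observation is that row $k$ of the matrix equation, together with $\lambda_{k+1}=\cdots=\lambda_d=0$, reads $a_1\lambda_k = p_k\in\Z$, so $\lambda_k = c_k/a_1$ for some $c_k\in\{1,\ldots,a_1-1\}$, and in particular $\lambda_k\geq 1/a_1$. The remaining rows then recursively constrain the lower $\lambda_t$'s.

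The crux of the argument, where $\gcd(a_1,a_2)=1$ enters, is the auxiliary claim that $\lambda_t>0$ for every $1\leq t\leq k$. I would establish this by downward induction on $t$, with the sharper statement that $a_1^{k-t+1}\lambda_t$ is an integer not divisible by $a_1$. The base case $t=k$ says $a_1\lambda_k=c_k\in\{1,\ldots,a_1-1\}$, which is immediate. For the inductive step, multiply row $t$ of the matrix equation by $a_1^{k-t}$ to obtain
\[
  a_1^{k-t+1}\lambda_t + \sum_{l=2}^{s}a_l\cdot a_1^{k-t}\lambda_{t+l-1} \;=\; a_1^{k-t}p_t.
\]
Whenever $t+l-1\leq k$ one has $a_1^{k-t}\lambda_{t+l-1}=a_1^{l-2}\cdot\bigl(a_1^{k-(t+l-1)+1}\lambda_{t+l-1}\bigr)$, which by the inductive hypothesis is an integer divisible by $a_1^{l-2}$; terms with $t+l-1>k$ vanish because $\lambda_{t+l-1}=0$. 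Reducing modulo $a_1$ therefore kills every $l\geq 3$ contribution and the right-hand side (since $k-t\geq 1$), leaving $a_1^{k-t+1}\lambda_t\equiv -a_2\cdot a_1^{k-t}\lambda_{t+1}\pmod{a_1}$. Because $a_2$ is a unit modulo $a_1$ and $a_1^{k-t}\lambda_{t+1}$ is nonzero modulo $a_1$ by the inductive hypothesis, this forces $a_1^{k-t+1}\lambda_t\not\equiv 0\pmod{a_1}$, completing the induction and ensuring $\lambda_t\neq 0$.

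With positivity of every $\lambda_t$ on $[1,k]$ in hand, I would partition $\{1,\ldots,k\}$ into $\lfloor k/s\rfloor$ disjoint consecutive blocks of length $s$. For each block $[t,t+s-1]$ the row equation gives $p_t=a_1\lambda_t+a_2\lambda_{t+1}+\cdots+a_s\lambda_{t+s-1}$, and since $\lambda_t>0$ this is a positive integer, hence $p_t\geq 1$. Combined with $a_l\leq a_1$ this yields the block-sum inequality $\lambda_t+\lambda_{t+1}+\cdots+\lambda_{t+s-1}\geq 1/a_1$. Summing across all $\lfloor k/s\rfloor$ blocks gives $p_0\geq \lfloor k/s\rfloor /a_1$ as claimed. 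The Ehrhart limit conclusion then follows exactly as in Theorem~\ref{thm:mehrhartlimit}: any fundamental parallelepiped point contributing to $z^h$ must have $k\leq sa_1h$, the row equations for $i\leq k$ involve only the finite data $c_k$ and the finitely many integer choices of $p_1,\ldots,p_{k-1}$ (independent of $d$ once $d\geq k$), and $\lambda_0$ is then determined by the row-0 equation, so the coefficient of $z^h$ in $h^*(P(\ba;d);z)$ stabilizes.

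The main technical obstacle I anticipate is the exponent bookkeeping in the inductive step, in particular handling the boundary case where $t$ is close to $k$ so that some indices $t+l-1$ with $l\geq 2$ exceed $k$; in those cases $\lambda_{t+l-1}=0$ by the defining property of $k$ rather than by the inductive hypothesis, and one must verify that such terms genuinely drop out of the scaled row equation modulo $a_1$ without breaking the cascade argument.
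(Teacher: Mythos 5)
Your proposal is correct and takes essentially the same approach as the paper: a downward induction using $\gcd(a_1,a_2)=1$ to show that $\lambda_t$ has exact denominator $a_1^{k-t+1}$ (hence is nonzero) for $1\leq t\leq k$, followed by a lower bound of $1/a_1$ on each sum of $s$ consecutive $\lambda$'s, summed over $\lfloor k/s\rfloor$ disjoint blocks. Your derivation of the block bound directly from $p_t\geq 1$ and $a_l\leq a_1$ is a slightly cleaner route to the same inequality the paper obtains by an explicit numerator computation.
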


\begin{proof}
  Without loss of generality, assume that $k=d$.
  Suppose that $\lambda_k=\frac{x_k}{a_1}$ for some $0<x_1< a_1$, which must occur since $p_k\in \Z$ and $\lambda_k\neq 0$.
  From the second-to-bottom row of the matrix equation above, it follows that for some $t_{k-1}$,
  \[
    \lambda_{k-1}a_1 + a_2\frac{x_k}{a_1} = \frac{\lambda_{k-1}a_1^2 + a_2x_k}{a_1}=t_{k-1}\in \Z_{\geq 1} \, .
  \]
  Thus, solving for $\lambda_{k-1}$, we have
  \[
\lambda_{k-1}= \frac{t_{k-1}}{a_1}-\frac{a_2x_k}{a_1^2} =: \frac{x_{k-1}}{a_1^2} 
\]
for an integer $x_{k-1}$.
Note that since $\gcd(a_1,a_2)=1$ and $0< x_k<a_1$, we have $a_1 \nmid a_2x_k$.
Thus, $\frac{a_2x_k}{a_1^2}$ does not reduce to a fraction with denominator $a_1$, and hence $\frac{t_{k-1}}{a_1}-\frac{a_2x_k}{a_1^2}$ is a non-zero fraction with denominator $a_1^2$.
Thus, $0<x_{k-1}<a_1^2$ and $a_1\nmid x_{k-1}$. 

Proceeding to the next row of the matrix equation above, and using the same process as above, we find there exists a $t_{k-2}\in \Z_{\geq 1}$ such that
\[
\lambda_{k-2}= \frac{t_{k-2}a_1-a_3x_k}{a_1^2}-\frac{a_2x_{k-1}}{a_1^3} =: \frac{x_{k-2}}{a_1^3} 
\]
for an integer $x_{k-2}$.
Similarly to our previous step, we have $\gcd(a_1,a_2)=1$ and $a_1\nmid x_{k-1}$ implies that $\frac{a_2x_{k-1}}{a_1^3}$ fully reduced has denominator $a_1^3$.
Thus, $\frac{x_{k-2}}{a_1^3}$ is a non-zero fraction that in reduced form has denominator $a_1^3$, hence we have $0<x_{k-2}<a_1^3$ and $a_1\nmid x_{k-2}$.

By induction, there exists a $t_{k-j}\in \Z_{\geq 1}$ such that $0<x_{k-i}<a_1^{i+1}$ and $a_1\nmid x_{k-i}$ for all $i<j$ and
\[
\lambda_{k-j}= \frac{t_{k-j}a_1^{j-1} -\sum_{i=2}^{s-1}x_{k-j+i}a_{i+1}a_1^{i-2}}{a_1^j}-\frac{a_2x_{k-j+1}}{a_1^{j+1}} \, .
\]
Since $\gcd(a_1,a_2)=1$ and $a_1\nmid x_{k-j+1}$ implies that $\frac{a_2x_{k-j+1}}{a_1^{j+1}}$ fully reduced has denominator $a_1^{j+1}$, it follows that
\[
\lambda_{k-j}=\frac{x_{k-j}}{a_1^{j+1}}
\]
for some $0<x_{k-j}<a_1^{j+1}$ with $a_1\nmid x_{k-j}$ where
\[
x_{k-j}=t_{k-j}a_1^j -\sum_{i=1}^{s-1}x_{k-j+i}a_{i+1}a_1^{i-1} \, .
  \]

From this, it follows that $\lambda_{k-j}=\frac{x_{k-j}}{a_1^{j+1}}>0$ for every $j$.
Hence, for any $j$ such that
\[
  \lambda_{k-j},\lambda_{k-j+1},\ldots,\lambda_{k-j+s-1}\neq 0\, ,
\]
we have
\begin{align*}
  \sum_{i=0}^{s-1}\lambda_{k-j+i}& =\sum_{i=0}^{s-1}\frac{x_{k-j+i}}{a_1^{j+1-i}}\\
                                 & = \frac{\sum_{i=0}^{s-1}x_{k-j+i}a_1^i}{a_1^{j+1}}\\
  & = \frac{a_1^jt_{k-j}-\sum_{i=1}^{s-1}x_{k-j+i}a_1^{i-1}a_{i+1}+\sum_{i=1}^{s-1}x_{k-j+i}a_1^i}{a_1^{j+1}}\\
                                 & = \frac{a_1^jt_{k-j}+\sum_{i=1}^{s-1}x_{k-j+i}(a_1^i-a_1^{i-1}a_{i+1})}{a_1^{j+1}}\\
  & > \frac{1}{a_1}
\end{align*}
where the final inequality follows from $t_{k-j}\geq 1$ and $a_1-a_{i+1}>0$ for $i=1,\ldots,s-1$ by assumption.
Thus, since $p_0=\sum_{j=0}^k\lambda_j$ is the height of $\bp$, we have that
\[
    p_0\geq \left\lfloor \frac{k}{s}\right\rfloor \frac{1}{a_1}
  \]
  as desired.
\end{proof}


\section{Conclusion}\label{sec:conclusion}

It is likely to be difficult to classify all Ehrhart limits in $\Z[[z]]$.
Despite this, there are interesting further questions that arise when considering Ehrhart limits, such as the following.

\begin{question}
  For the simplices in Theorem~\ref{thm:multilimit}, what are the explicit power series that arise as limits in this case?
Do their coefficients eventually satisfy a linear recursion?
\end{question}

\begin{question}
Are there other sequences of simplices $\Delta_d$ indexed by dimension $d$ where the fundamental parallelepiped points exhibit the nested stabilization structure seen in the proof of Theorem~\ref{thm:multilimit}?
\end{question}

\begin{question}
  Can we characterize the Ehrhart limits $f(z)=\sum_{i=0}^\infty f_iz^i\in \Z[[z]]$ satisfying the property that for all sufficiently large $i$, $f_i$ is constant?
  For example, by Theorem~\ref{thm:freesumlimit}, such power series arise as the limit of $Q\oplus S_d$ as $d\to \infty$ when $Q$ contains the origin as an interior point.
\end{question}

\begin{question}
What general methods are there to produce examples of Ehrhart limits that are not polynomials and that do not arise from reflexive polytopes or simplices?
\end{question}

\bibliographystyle{plain}
\bibliography{Braun}

\end{document}